\def\s{\mathbb{S}}
\def\h{\mathbb{H}}
\def\r{\mathbb{R}}
\def\z{\mathbb{Z}}
\def\q{\mathbb{Q}}
\def\c{\mathbb{C}}
\newcommand{\arcsinh}{\mathop{\rm arcsinh }\nolimits}
\newtheorem{definition}{Definition}
\newtheorem{remark}{Remark}
\newtheorem{theorem}{Theorem}
\newtheorem{proposition}{Proposition}
\newtheorem{corollary}{Corollary}
\begin{document}

\title[HSL self-similar solutions to MCF]{Hamiltonian stationary self-similar solutions for Lagrangian mean curvature flow
\\ in complex Euclidean plane}

\author{Ildefonso Castro}
\address{Departamento de Matem\'{a}ticas \\
Universidad de Ja\'{e}n \\
23071 Ja\'{e}n, SPAIN} \email{icastro@ujaen.es}

\author{Ana M.~Lerma}
\address{Departamento de Matem\'{a}ticas \\
Universidad de Ja\'{e}n \\
23071 Ja\'{e}n, SPAIN} \email{alerma@ujaen.es}

\thanks{Research partially supported by a MEC-Feder grant MTM2007-61775}

\subjclass[2000]{Primary 53C42, 53B25; Secondary 53D12}

\keywords{Mean curvature flow, self-similar solutions, Hamiltonian
stationary Lagrangian surfaces. }

\date{}



\date{}

\begin{abstract}
We classify all Hamiltonian stationary Lagrangian surfaces in
complex Euclidean plane which are self-similar solutions of the
mean curvature flow.
\end{abstract}

\maketitle

\section{Introduction}
The mean curvature flow is an evolution process under which a
submanifold deforms in the direction of its mean curvature vector.
By the first variation formula, the mean curvature vector points
to the direction where the volume decreases most rapidly.
Classically it has been studied by several approaches (partial
differential equations, geometric measure theory, level sets or
numerical methods) and its possible applications in symplectic
topology and mirror symmetry are quite important.

There are very interesting results on regularity, global existence
and convergence of the mean curvature flow in several ambient
spaces. The flow for hypersurfaces in arbitrary Riemannian
manifolds is well understood whereas not so much is known when the
codimension is greater than one.  In higher codimension the mean
curvature $H$ is a vector whose direction we do not know how to
control, as a contrast with the codimension one case, where $H$ is
essentially a scalar function whose sign is preserved along the
flow. In the last few years, mean curvature flow of higher
codimension submanifolds has attracted special attention, mainly
when the initial submanifold is Lagrangian in complex Euclidean
space $\c^n$; see \cite{ChL}, \cite{GSSZ}, \cite{N1}, \cite{N2},
\cite{TY} and \cite{Wa} for example. One reason of this growing
interest is that the Lagrangian condition is preserved by mean
curvature flow (see \cite{S}). In addition, as the gradient flow
of the volume functional, the mean curvature flow seems to be a
potential approach to the construction of special Lagrangians,
which are volume minimizers that play a critical role in the
T-duality formulation of mirror symmetry \cite{SYZ}.

When the ambient space is Euclidean, the mean curvature flow is
the solution to a system of parabolic equations that can be
considered as the heat equation for submanifolds. In general, the
mean curvature flow fails to exist after a finite time. The
singularities are completely determined by the blow up of the
second fundamental form. In geometric flows, like mean curvature
flow or Ricci flow, the singularities often model on soliton
solutions. In this article we are interested in a class of special
solutions of the mean curvature flow that preserve the shape of
the evolving submanifolds: the self-similar solutions, for which
the evolution is a homothety. Eliminating the time variable, these
self-similar solutions reduce the parabolic equation to the
non-linear elliptic equation $H=a\phi^\perp$, where $\phi^\perp $
stands for the projection of the position vector $\phi $ onto the
normal space. When $a$ is a negative constant, the submanifolds
shrinks in finite time to a single point under the action of the
mean curvature flow remaining its shape unchanged. If $a$ is
positive, the submanifold will expand with the same shape and in
this case is necessarily non-compact. If $a$ vanishes, the
submanifold is minimal and stationary under the action of the
flow.

The study of this type of solutions is hoped to give a better
understanding of the flow at a singularity since by Huisken's
monotonicity formula \cite{Hu}, any central blow-up of a
finite-time singularity of the mean curvature flow is a
self-similar solution.

Examples of self-similar solutions for mean curvature flow in
$\c^n$ were first constructed by Anciaux \cite{A} in 2006. In
order to produce eternal solutions of the Brakke flow (a weak
formulation of the mean curvature flow), Lee and Wang
\cite{LW1}-\cite{LW2} constructed in 2007 interesting examples of
Hamiltonian stationary self-shrinkers and self-expanders for
Lagrangian mean curvature flows, which are asymptotic to
Hamiltonian stationary cones generalizing Schoen-Wolfson ones
\cite{SW}. All of them have been generalized very recently by
Joyce, Lee and Tsui in \cite{JLT} providing examples with
different topologies.

It is expectable that the understanding of the singularities of
the mean curvature flow will rely on the classification of
self-similar solutions. But it is a hard and open problem classify
all the self-similar solutions for the mean curvature flow.  Using
strongly techniques of complex analysis, our contribution in this
paper is the classification of those ones in complex Euclidean
plane that are, in addition, Hamiltonian stationary Lagrangians,
i.e.\ critical points of the area functional among all Hamiltonian
deformations. The main result (Theorem 1) characterizes in this
way three one-parameter families of examples which include Lee and
Wang examples in dimension two (see Remarks 2, 3 and 5). We
provide (see Propositions 2 and 3) not only Hamiltonian stationary
Lagrangian conformal immersions of cylinders, Moebius strips, tori
and Klein bottles but also embedded Lagrangian nontrivial planes
all of them self-similar solutions for mean curvature flow. Our
construction is based essentially in a method given in \cite{CCh}
for obtaining Lagrangian surfaces in $\c^2$ starting from
spherical and hyperbolic curves; in this setting our examples are
constructed using simple particular geodesics and parallels (see
Remarks 4 and 6).

{\it Acknowledgments:} The authors would like to thank  Francisco
Urbano for his valuable suggestions and constant availability and
Henri Anciaux for helpful discussions.


\section{Hamiltonian stationary Lagrangian surfaces in $\c^2$}

In the complex Euclidean plane $\c^2$ we consider the bilinear Hermitian product defined by
\[
(z,w)=z_1\bar{w}_1+z_2\bar{w}_2, \quad  z,w\in\c^2.
\]
Then $\langle\, \, , \, \rangle = {\rm Re} (\,\, , \,)$ is the
Euclidean metric on $\c^2$ and $\omega = -{\rm Im} (\,,)$ is the
Kaehler two-form given by $\omega (\,\cdot\, ,\,\cdot\,)=\langle
J\cdot,\cdot\rangle$, where $J$ is the complex structure on
$\c^2$. We also have that $2 \omega = d\lambda$, where $\lambda$
is known as the Liouville 1-form of $\c^2$.

Let $\phi:M \rightarrow \c^2$ be an isometric immersion of a surface $M$ into $\c^2$. $\phi $ is
said to be Lagrangian if  $\phi^* \omega = 0$.  Then we have $\phi^* T\c^2 =\phi_* TM \oplus J
\phi_* T M$, where $TM$ is the tangent bundle of $M$. The second fundamental form $\sigma $ of
$\phi $ is given by $\sigma (v,w)=JA_{Jv}w$, where $A$ is the shape operator, and so the trilinear
form $C(\cdot,\cdot,\cdot)=\langle \sigma(\cdot,\cdot), J \cdot \rangle $ is fully symmetric.

Suppose $M$ is orientable and let $\omega_M$ be the area form of
$M$. If $\Omega = dz_1 \wedge dz_2$ is the closed complex-valued
2-form of $\c^2$, then $\phi^* \Omega = e^{i\beta} \omega_M$,
where $\beta:M\rightarrow \r /2\pi \z$ is called the {\em
Lagrangian angle} map of $\phi$ (see \cite{HL}).  In general
$\beta $ is a multivalued function; nevertheless $d\beta $ is a
well defined closed 1-form on $M$ and its cohomology class is
called the {\em Maslov class}. When $\beta $ is a single valued
function the Lagrangian is called {\em zero-Maslov class} and if
$\cos\beta \geq \epsilon$ for some $\epsilon >0$ then the
Lagrangian is said to be {\em almost calibrated}.

Note that the Lagrangian condition implies that the Liouville
1-form $\lambda $ is a closed 1-form on $M$. A Lagrangian
submanifold is said to be {\em monotone} if $[\lambda]=c[d\beta]$,
for some positive constant $c$. The standard examples of monotone
Lagrangians in $\c^2$ are the Clifford tori
\[
T_r\equiv \{ (z_1,z_2)\in \c^2 \,:\, |z_1|=|z_2|=r \}, \, r>0.
\]

It is remarkable that $\beta$ satisfies (see for example
\cite{SW})
\begin{equation}\label{beta}
J\nabla\beta=2H,
\end{equation}
where $H$ is the mean curvature vector of $\phi$, defined by
$H=(1/2) {\rm trace} \, \sigma$. (We point out that some other
authors, e.g.\ \cite{LW1}, \cite{LW2}, consider $H= {\rm trace} \,
\sigma$ and then $J\nabla\beta=H$).

If $\beta $ is constant, say $\beta\equiv \beta_0$ (or,
equivalently $H=0$), then the Lagrangian immersion $\phi $ is
calibrated by ${\rm Re} (e^{-i\beta_0}\Omega)$ and hence
area-minimizing. They are referred as being {\em Special
Lagrangian}.

A Lagrangian submanifold is called {\em Hamiltonian stationary} if
the Lagrangian angle $\beta $ is harmonic, i.e. $\Delta \beta =0$,
where $\Delta $ is the Laplace operator on $M$. Hamiltonian
stationary Lagrangian (in short HSL) surfaces are critical points
of the area functional with respect to a special class of
infinitesimal variations preserving the Lagrangian constraint;
namely, the class of compactly supported Hamiltonian vector fields
(see \cite{O}). Examples of HSL surfaces in $\c^2$ can be found in
\cite{AC}, \cite{CU2} and \cite{HR1}.

\section{Self-similar solutions for the mean curvature flow}

Let $\phi: M\rightarrow \r^4$ be an immersion of a smooth surface $M$ in Euclidean 4-space. It is
not hard to check that
\begin{equation}\label{Hdelta}
2H=\Delta \phi,
\end{equation}
where $H$ is the mean curvature vector and $\Delta $ is the Laplace operator of the induced metric
on $M$.

The mean curvature flow (in short MCF) of $\phi: M\rightarrow
\r^4$ is a family of immersions $F:M \times [0,T_{\mathrm{sing}})
\rightarrow \r^4$ parameterized by $t$ that satisfies
\begin{equation}\label{MCF}
\frac{d}{dt}F_t(p)=H(p,t), \quad F_0=\phi,
\end{equation}
where $H(p,t)$ is the mean curvature vector of $F_t(M)$ at
$F_t(p)=F(p,t)$ and $[0,T_{\mathrm{sing}})$ is the maximal time
interval such that (\ref{MCF}) holds. Looking at equation
(\ref{Hdelta}), this can be considered as the heat equation for
submanifolds.

The equation (\ref{MCF}) is a quasi-linear parabolic system and short time existence is guaranteed
when the initial submanifold $M$ is compact.

When $\phi: M\rightarrow \r^4\equiv \c^2$ is Lagrangian it is well
known  that $0 < T_{\mathrm{sing}}< \infty$ and being Lagrangian
is preserved along the mean curvature flow. The monotone condition
defined in Section 2 is also preserved under the flow (see
\cite{GSSZ}).

In geometric flows such as the Ricci flow or the MCF, singularities are often locally modelled on
soliton solutions. In the case of MCF, one type of soliton solutions of great interest are those
moved by scaling in the Euclidean space. We recall that solitons moved by scaling must be of the
following form:

\begin{definition}
An immersion $\phi: M\rightarrow \r^4$ is called a self-similar solution for mean curvature flow if
\begin{equation}\label{ss}
H=a\, \phi^\perp
\end{equation}
for some nonzero constant a, where $\phi^\perp $ denotes the normal projection of the position
vector $\phi $ and $H$ is the mean curvature vector of $\phi $. If $a<0$, it is called a
self-shrinker and if $a>0$ it is called a self-expander.
\end{definition}

\begin{remark}
{\rm  In the Lagrangian setting, using (\ref{beta}) the condition
(\ref{ss}) to be a self-similar for MCF can be reformulated in
terms of the Lagrangian angle map and the restriction to the
surface of the Liouville 1-form by means of $$ d\beta=-4a \phi^*
\lambda, \ a\neq 0. $$ In particular, any Lagrangian self-similar
solution for MCF is monotone.}
\end{remark}

If $\phi $ is a self-similar solution, then
$F_t:=\sqrt{2at+1}\,\phi $, $2at+1>0$, is a solution of
(\ref{MCF}). When $a=0$ the submanifold is minimal, i.e. $H=0$,
and is fixed by MCF.

One could normalize the value of $a$ up to dilations, but we
prefer not to do it because we will take different particular
values of $a$ along the paper to recover some known examples.

It is an exercise to check that, given any $a<0$, the right
circular cylinder $\s^1({1 \over \sqrt{-2a}})\times \r$ and the
Clifford torus $\s^1({1\over \sqrt{-2a}})\times
\s^1({1\over\sqrt{-2a}})$ satisfy equation (\ref{ss}). Thus they
are examples of self-shrinkers. Both of them are also Lagrangian
surfaces with parallel mean curvature vector and hence Hamiltonian
stationary Lagrangians.

We finish this section studying a close relationship between the
area and the Willmore functional of compact Lagrangian
self-shrinkers, that it is even true for monotone compact HSL
surfaces (see \cite{Mi}).

\begin{proposition}\label{areaWillmore}
Let $\phi: M\rightarrow \c^2$ be a Lagrangian self-similar
solution for mean curvature flow, i.e.\ $H=a\phi^\perp$, $a\neq
0$. Then $$\mathrm{div}\,\phi^\top = 2\left( 1+\frac{|H|^2}{a}
\right),
$$ where $\phi^\top$ denotes the tangent projection of the position vector $\phi$. In
particular, if $M$ is compact then $a<0$ and
$\mathrm{Area}(M)=-\frac{1}{a}\int_M |H|^2$.
\end{proposition}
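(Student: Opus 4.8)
The plan is to compute $\mathrm{div}\,\phi^\top$ directly from the Gauss--Weingarten decomposition, letting the self-similar equation enter only at the end to convert a shape-operator trace into $|H|^2$. Let $D$ be the flat connection of $\r^4\equiv\c^2$ and $\nabla$ the Levi--Civita connection of the induced metric on $M$, and fix a local orthonormal frame $\{e_1,e_2\}$ of $TM$. Since $\phi$ is the position vector, $D_{e_i}\phi=e_i$ (identifying $e_i$ with $\phi_*e_i$). Writing $\phi=\phi^\top+\phi^\perp$ and taking the tangential component of this identity, the Weingarten formula $(D_{e_i}\phi^\perp)^\top=-A_{\phi^\perp}e_i$ gives
\begin{equation*}
\nabla_{e_i}\phi^\top=e_i+A_{\phi^\perp}e_i.
\end{equation*}

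First I would sum over $i$ to obtain $\mathrm{div}\,\phi^\top=\sum_i\langle\nabla_{e_i}\phi^\top,e_i\rangle=2+\mathrm{trace}\,A_{\phi^\perp}$. The key identity is $\mathrm{trace}\,A_{\phi^\perp}=2\langle H,\phi^\perp\rangle$, which follows from $\langle A_\nu e_i,e_j\rangle=\langle\sigma(e_i,e_j),\nu\rangle$ together with the normalization $H=\tfrac12\,\mathrm{trace}\,\sigma$ used in the paper. Substituting the self-similar relation $\phi^\perp=H/a$ yields $\langle H,\phi^\perp\rangle=|H|^2/a$, and hence
\begin{equation*}
\mathrm{div}\,\phi^\top=2\Bigl(1+\frac{|H|^2}{a}\Bigr),
\end{equation*}
which is the first assertion. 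Note that the Lagrangian hypothesis plays no role here; this part is a general fact about self-similar solutions.

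For the compact case I would integrate this identity over $M$. Since $M$ is compact without boundary, the divergence theorem gives $\int_M\mathrm{div}\,\phi^\top=0$, so $0=2\,\mathrm{Area}(M)+\tfrac{2}{a}\int_M|H|^2$, which rearranges to $\mathrm{Area}(M)=-\tfrac1a\int_M|H|^2$. To deduce $a<0$ I would observe that the left-hand side is strictly positive while $\int_M|H|^2\ge 0$, so the sign forces $a<0$ (and in particular $\int_M|H|^2>0$); the degenerate case $H\equiv 0$, which would make $\mathrm{Area}(M)=0$, is excluded because a compact surface in Euclidean space cannot be minimal. The computation is essentially mechanical, so there is no serious analytic obstacle; the only point requiring care is keeping straight the numerical factors in the trace identity $\mathrm{trace}\,A_{\phi^\perp}=2\langle H,\phi^\perp\rangle$, dictated by the convention $H=\tfrac12\,\mathrm{trace}\,\sigma$.
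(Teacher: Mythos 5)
Your proof is correct and takes essentially the same approach as the paper: differentiate the tangential/normal decomposition of the position vector, apply the Weingarten formula, and take the trace, with the only cosmetic difference being that the paper substitutes $\phi^\perp=H/a$ \emph{before} differentiating (working with $\phi=\phi^\top+H/a$ and $A_H$) while you substitute it at the end. Your divergence-theorem argument for the compact case, including the sign deduction $a<0$, is exactly the standard completion that the paper leaves implicit.
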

\begin{proof}
Derivating in the direction of any tangent vector $v$ in the
equality $\phi = \phi^\top + H/a$ and taking tangent components,
we get that $v=\nabla_v \phi^\top - \frac{1}{a}A_H v$. If $\{ e_1,
e_2 \}$ is an orthonormal frame in $M$, then
$\mathrm{div}\,\phi^\top = 2 +\frac{1}{a} \sum_{i=1}^2 \langle A_H
e_i, e_i \rangle =2 + \frac{2}{a}|H|^2.$
\end{proof}


\section{The examples} 

In this section we introduce three one-parameter families of HSL
surfaces in $\c^2$ which are self-similar solutions for mean
curvature flow satisfying (\ref{ss}) and describe their main
geometric properties. In the next result, we provide examples with
the topology of a plane, a cylinder or a Moebius strip.

\begin{proposition}\label{ex1}
Given any $a > 0$, let define:
$$ \Phi_\delta:\r^2 \rightarrow \c^2, \ \delta >0,$$
\begin{equation}\label{Phib}
\Phi_\delta(s,t)= \frac{1}{\sqrt{2a}} \left( i \,\mathrm{s}_\delta
\,\cosh t\,e^{-\frac{i\,s}{\mathrm{c}_\delta}} , \mathrm{t}_\delta
\, \sinh t\, e^{i\, \mathrm{c}_\delta\,s} \right),
\end{equation}
with $\mathrm{s}_\delta=\sinh \delta $, $\mathrm{c}_\delta=\cosh
\delta$ and $\mathrm{t}_\delta=\tanh \delta$. Then $\Phi_\delta$
is a Hamiltonian stationary Lagrangian conformal immersion and a
self-similar solution for mean curvature flow satisfying
(\ref{ss}).

If $\cosh^2 \delta \notin \q$, $\Phi_\delta$ is  -in addition- an
embedded self-expander plane asymptotic to the Hamiltonian
stationary Lagrangian cone
$$\{ (i\, x_1\, e^{-\frac{i\,s}{\mathrm{c}_\delta}},x_2\,  e^{i\,
\mathrm{c}_\delta\,s}) \, : \, (x_1,x_2)\in \r^2, \, s\in \r , \,
x_1^2=\cosh^2 \delta \, x_2^2 \}.$$

If $\cosh^2 \delta = p/q \in \q$, $(p,q)=1$, $p>q$, then
$\Phi_\delta$ is given by 
$$\Phi_{p,q}:\r^2 \rightarrow \c^2, \ p>q,$$
\begin{equation}\label{Phipq}
\Phi_{p,q}(s,t)=\sqrt{\frac{p-q}{2a}} \left(
\frac{i}{\sqrt{q}}\,\cosh t\,e^{-i \sqrt{\frac{q}{p}}s} ,
\frac{1}{\sqrt{p}}\,\sinh t\,e^{i \sqrt{\frac{p}{q}}s} \right)
\end{equation}
and satisfies $\Phi_{p,q}(s+2\pi\sqrt{pq},t)=\Phi_{p,q}(s,t)$,
$\forall (s,t)\in \r^2$, inducing a Hamiltonian stationary
Lagrangian immersion of a self-expander cylinder.

Moreover, if $p$ is odd and $q$ is even, then
$\Phi_{p,q}(s+\pi\sqrt{pq},-t)=\Phi_{p,q}(s,t)$, $\forall (s,t)\in
\r^2$, inducing a Hamiltonian stationary Lagrangian immersion of a
self-expander Moebius strip.
\vspace{0.5cm}

 Given any $a < 0$, let define:
$$\Upsilon_\gamma:\r^2 \rightarrow \c^2, \  0<\gamma <\pi/2,$$
\begin{equation}\label{Phib2}
\Upsilon_\gamma(s,t)= \frac{1}{\sqrt{-2a}} \left( -i \,
\mathrm{s}_\gamma \,\cosh t\,e^{\frac{i\,s}{\mathrm{c}_\gamma}} ,
\mathrm{t}_\gamma \, \sinh t\,e^{-i\, \mathrm{c}_\gamma\,s}
\right),
\end{equation}
with $\mathrm{s}_\gamma=\sin \gamma $, $\mathrm{c}_\gamma=\cos
\gamma$ and $\mathrm{t}_\gamma=\tan \gamma$. Then
$\Upsilon_\gamma$ is a Hamiltonian stationary Lagrangian conformal
immersion and a self-similar solution for mean curvature flow
satisfying (\ref{ss}).

If $\cos^2 \gamma \notin \q$, $\Upsilon_\gamma$ is -in addition-
an embedded self-shrinker plane asymptotic to the Hamiltonian
stationary Lagrangian cone
$$\{ (-i\, x_1\, e^{\frac{i\,s}{\mathrm{c}_\gamma}}, x_2\, e^{-i\, \mathrm{c}_\gamma\,s})
\, : \, (x_1,x_2)\in \r^2, \, s\in \r, \,  x_1^2=\cos^2 \gamma \,
x_2^2\}.$$

If $\cos^2 \gamma = p/q \in \q$, $(p,q)=1$, $p<q$, then
$\Upsilon_\gamma$ is given by 
$$\Upsilon_{p,q}: \r^2 \rightarrow \c^2, \
p<q,$$
\begin{equation}\label{Phipq2}
\Upsilon_{p,q}(s,t)=\sqrt{\frac{q-p}{-2a}} \left(
\frac{-i}{\sqrt{q}}\,\cosh t\,e^{i \sqrt{\frac{q}{p}}s} ,
\frac{1}{\sqrt{p}}\,\sinh t\,e^{-i \sqrt{\frac{p}{q}}s} \right)
\end{equation}
and satisfies
$\Upsilon_{p,q}(s+2\pi\sqrt{pq},t)=\Upsilon_{p,q}(s,t)$, $\forall
(s,t)\in \r^2$, inducing a Hamiltonian stationary Lagrangian
immersion of a self-shrinker cylinder.

Moreover, if $q$ is even and $p$ is odd, then
$\Upsilon_{p,q}(s+\pi\sqrt{pq},-t)=\Upsilon_{p,q}(s,t)$, $\forall
(s,t)\in \r^2$, inducing a Hamiltonian stationary Lagrangian
immersion of a self-shrinker Moebius strip.
\end{proposition}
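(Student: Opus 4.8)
The plan is to verify the four analytic claims (Lagrangian, conformal, Hamiltonian stationary, self-similar) by one direct computation of the first fundamental form together with the Lagrangian angle, and then to settle the topological claims (embedding, cylinder, Moebius strip) by a separate number-theoretic analysis of the two phases in $s$. I treat $\Phi_\delta$ in detail; the family $\Upsilon_\gamma$ is handled verbatim after replacing the hyperbolic functions of $\delta$ by the trigonometric ones of $\gamma$ and changing the sign of $a$, so I only indicate the differences at the end. Writing $c=\cosh\delta$, I would first compute the Hermitian products of the coordinate fields $\Phi_s,\Phi_t$. The key observation is that the single identity $(\Phi_s,\Phi_t)=0$ yields at once the Lagrangian condition $\omega(\Phi_s,\Phi_t)=-\mathrm{Im}(\Phi_s,\Phi_t)=0$ and the off-diagonal half of conformality $\langle\Phi_s,\Phi_t\rangle=0$; the cancellation comes from the opposite signs produced by the factors $i$ and by the products $\cosh t\,\sinh t$ against $\sinh t\,\cosh t$. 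A parallel computation gives $|\Phi_s|^2=|\Phi_t|^2=\rho^2$ with $\rho^2=\tfrac{\sinh^2\delta}{2a}\big(\tfrac{\cosh^2 t}{c^2}+\sinh^2 t\big)>0$, finishing conformality and showing that $\Phi_\delta$ is an immersion.

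Next I would read off the Lagrangian angle from $\Phi_\delta^*\Omega=(\partial_s z_1\,\partial_t z_2-\partial_t z_1\,\partial_s z_2)\,ds\wedge dt$. The crucial algebraic fact is that this bracket factors as $\rho^2 e^{i(c-1/c)s}$: both summands carry the common phase $e^{i(c-1/c)s}$, and their real amplitudes recombine, via $\cosh^2 t-\sinh^2 t=1$, into exactly $\rho^2$. Hence $\beta(s,t)=(\cosh\delta-1/\cosh\delta)\,s$ up to a constant, which is linear in $s$ and independent of $t$; since $\Delta=\rho^{-2}(\partial_s^2+\partial_t^2)$ for a conformal metric, $\Delta\beta=0$ and $\Phi_\delta$ is Hamiltonian stationary. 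For the self-similar equation I would use $2H=J\nabla\beta$ from (\ref{beta}): as $\beta_t=0$, this gives $2H=\rho^{-2}(c-1/c)J\Phi_s$. Separately I compute $\Phi^\perp$ against the Lagrangian normal frame $\{J\Phi_s,J\Phi_t\}$; the point is that $\langle\Phi,J\Phi_t\rangle=\mathrm{Im}(\Phi,\Phi_t)=0$ while $\langle\Phi,J\Phi_s\rangle=\mathrm{Im}(\Phi,\Phi_s)=\tfrac{\sinh^2\delta}{2ac}$, again by $\cosh^2 t-\sinh^2 t=1$. Both $H$ and $a\Phi^\perp$ are then multiples of $J\Phi_s/\rho^2$, and $H=a\Phi^\perp$ reduces to the identity $\cosh\delta-1/\cosh\delta=\sinh^2\delta/\cosh\delta$; the normalizing constant $1/\sqrt{2a}$ is precisely what cancels the $a$ and makes the two sides match, and the sign comes out positive, so $\Phi_\delta$ is a self-expander.

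The remaining, and genuinely harder, part is the topology. For the asymptotics I would examine $t\to\pm\infty$: since $\cosh t,\sinh t\sim\tfrac12 e^{|t|}$ and $\sinh\delta=\cosh\delta\tanh\delta$, the rescaled immersion converges to the stated cone, the two ends $t\to\pm\infty$ limiting onto the two branches $x_1=\pm\cosh\delta\,x_2$. For injectivity when $\cosh^2\delta\notin\q$, the moduli $|z_1|,|z_2|$ first pin down $|t|$ (they lie on a fixed level set of hyperbola type), and matching the two arguments then forces $s$ to satisfy simultaneously $s\equiv s'\pmod{2\pi\cosh\delta}$ and $\cosh\delta\,s\equiv\cosh\delta\,s'\pmod{2\pi}$; the intersection $\cosh\delta\,\z\cap(\cosh\delta)^{-1}\z$ is trivial exactly because $\cosh^2\delta\notin\q$, giving $s=s'$, and a parity check rules out the sign ambiguity $t\mapsto-t$. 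When $\cosh^2\delta=p/q$ the same two congruences become commensurate with least common period $s\mapsto s+2\pi\sqrt{pq}$ (from $\sqrt{q/p}\cdot\sqrt{pq}=q$ and $\sqrt{p/q}\cdot\sqrt{pq}=p$), which both closes the immersion into a cylinder and, after substituting $\sinh\delta=\sqrt{(p-q)/q}$ and $\tanh\delta=\sqrt{(p-q)/p}$, rewrites $\Phi_\delta$ as $\Phi_{p,q}$ in (\ref{Phipq}).

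Finally the Moebius identification $\Phi_{p,q}(s+\pi\sqrt{pq},-t)=\Phi_{p,q}(s,t)$ is checked by the same two computations: the half-period contributes phases $e^{-i\pi q}$ and $e^{i\pi p}$ to the two components, and the hypotheses $q$ even, $p$ odd make these $+1$ and $-1$, the latter absorbing the sign from $\sinh(-t)=-\sinh t$; since this involution squares to the full period and reverses the $t$-direction, the quotient is a Moebius strip. I expect the main obstacle to be precisely this last block: organizing the phase/congruence bookkeeping cleanly, and in particular checking the behaviour on the exceptional locus $t=0$ (where $z_2$ vanishes and the phase of the second component drops out), so that the embedding and covering statements are stated correctly. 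The family $\Upsilon_\gamma$ then follows with $\cosh^2\delta-\sinh^2\delta=1$ replaced by $\cos^2\gamma+\sin^2\gamma=1$, $a<0$, and $p<q$, producing in the same way self-shrinker planes, cylinders and Moebius strips and the immersion $\Upsilon_{p,q}$ of (\ref{Phipq2}).
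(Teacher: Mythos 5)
Your proposal follows the paper's own proof essentially step for step: the same Hermitian-product computation giving $|\Phi_s|^2=|\Phi_t|^2$ and $(\Phi_s,\Phi_t)=0$ (so Lagrangian and conformal at once), the same identification of the Lagrangian angle $e^{i\beta}=e^{i\,\mathrm{s}_\delta^2 s/\mathrm{c}_\delta}$, linear in $s$ and hence harmonic for a conformal metric, the same use of $J\nabla\beta=2H$ together with the normal projection of the position vector to verify $H=a\Phi^\perp$, the same exponential asymptotics for the cone, the same rationality-of-$\mathrm{c}_\delta^2$ congruence argument for injectivity, and the same phase bookkeeping for the cylinder and Moebius identifications (the half-period phases $e^{-i\pi q}=1$, $e^{i\pi p}=-1$ for $q$ even, $p$ odd). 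All of these computations are correct and agree with the quantities in the paper's proof.

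The one step you leave open is exactly the one you flag at the end: the exceptional locus $t=0$, and this is a genuine gap --- indeed one that cannot be closed as the statement stands. On that line the second component vanishes, so $\Phi_\delta(s,0)=\frac{1}{\sqrt{2a}}\left(i\,\mathrm{s}_\delta\,e^{-is/\mathrm{c}_\delta},0\right)$ is $2\pi\mathrm{c}_\delta$-periodic in $s$: the whole line $\{t=0\}$ wraps infinitely often onto a circle, regardless of whether $\cosh^2\delta$ is rational, so $\Phi_\delta:\r^2\rightarrow\c^2$ is never injective. Moreover, the sheets through a point of that circle, coming from the parameters $(s+2\pi k\,\mathrm{c}_\delta,0)$, $k\in\z$, have $\Phi_t$-directions $(0,e^{i\mathrm{c}_\delta s}e^{2\pi i k\,\mathrm{c}_\delta^2})$, which for $\mathrm{c}_\delta^2\notin\q$ give infinitely many distinct tangent planes, so the image is not an embedded surface near that circle either. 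Your two congruences determine $s_1-s_2$ only when $\sinh t\neq 0$; when $t_1=t_2=0$ the second congruence is simply absent. Note that the paper's proof has the identical gap: its assertion that $\Phi(s_1,t_1)=\Phi(s_2,t_2)$ with $(s_1,t_1)\neq(s_2,t_2)$ forces $\mathrm{c}_\delta^2\in\q$ fails precisely for pairs on $t=0$. What the rationality argument actually proves is that the self-intersection set of $\Phi_\delta$ is exactly this circle (and likewise for $\Upsilon_\gamma$, and for $\Psi_\nu$ along $\sin s=0$ in Proposition 3), so the ``embedded plane'' claim should be read, or restated, with that locus excepted; your instinct to isolate it was correct.
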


\begin{remark}
{\rm We can recover the Hamiltonian stationary expanders
$\mathcal{E}$ and the Hamiltonian stationary shrinkers
$\mathcal{S}$ studied in Proposition 2.1 of \cite{LW1} by Lee and
Wang taking
$\mathcal{E}(u,\theta)=\Phi_{p,q}(\sqrt{pq}\,\theta,u)$ and
$\mathcal{S}(u,\theta)=i\Upsilon_{p,q}(\sqrt{pq}\,\theta,u)$ with
$\Phi_{p,q}$ given respectively in (\ref{Phipq}) and
$\Upsilon_{p,q}$ in (\ref{Phipq2}) being $a=\frac{p-q}{2pq}$. }
\end{remark}
\begin{remark}
{\rm By taking $x_1=\sqrt{\frac{p-q}{2a}}\,\frac{\cosh
t}{\sqrt{q}} >0$ and $x_2=\sqrt{\frac{p-q}{2a}}\,\frac{\sinh
t}{\sqrt{p}}$ we can rewrite
$$ \left( \begin{array}{cc} -i
& 0 \\ 0 & 1 \end{array} \right)\Phi_{p,q}(\sqrt{pq}\,\theta,t)
=(x_1 e^{-iq\theta},x_2 e^{ip\theta}), \ -q\,x_1^2 + p \,x_2^2 =
\frac{q-p}{2a}<0 $$ and
$$ \left( \begin{array}{cc} i
& 0 \\ 0 & 1 \end{array}
\right)\Upsilon_{p,q}(\sqrt{pq}\,\theta,t) =(x_1 e^{iq\theta},x_2
e^{-ip\theta}), \ q\,x_1^2 - p \,x_2^2 = \frac{q-p}{-2a}>0, $$
where $ 0\leq \theta < 2\pi $. In this way we arrive at the
examples given in Proposition 2.1 of \cite{LW2} for $n=2$.
Restricting $\theta \in [0,\pi)$, in Proposition 2.2 of \cite{LW2}
it is proved that these examples are oriented if and only if $p-q$
is even and are embedded if and only $q=1$.

We finally remark that $\Phi_\delta$ and $\Upsilon_\gamma$
generalize them although they can be included in the observation
made in Remark 2.1 of \cite{LW2}.}
\end{remark}

\begin{remark}
{\rm Following the spirit of \cite{CCh}, the Lagrangians
$\Phi_\delta $ (resp.\ $\Upsilon_\gamma$) are constructed with the
Legendrian geodesic $t\rightarrow (\sinh t, \cosh t)$ in the anti
De Sitter space $\h^1_3$ and the constant curvature Legendrian
curves $s \rightarrow \frac{1}{\sqrt{2a}}(\mathrm{t}_\delta \,
e^{i\, \mathrm{c}_\delta\,s} ,i \,\mathrm{s}_\delta \,e^{-i\,s /
\mathrm{c}_\delta})$ (resp.\ $s \rightarrow
\frac{1}{\sqrt{2a}}(\mathrm{t}_\gamma \, e^{-i\,
\mathrm{c}_\gamma\,s},-i \, \mathrm{s}_\gamma \,e^{i\,s /
\mathrm{c}_\gamma})$ ) in 3-spheres.}
\end{remark}

\begin{proof}
Given any $a>0$, let $\Phi=\Phi_\delta$. It is an exercise to
check that
$$
|\Phi_s|^2=\frac{1}{2a}\left(
\mathrm{t}_\delta^2\cosh^2t+\mathrm{s}_\delta^2
\sinh^2t\right)=|\Phi_t|^2, \ (\Phi_s,\Phi_t)=0.
$$
This shows that $\Phi$ is a Lagrangian conformal immersion whose
induced metric is given by $\langle ,
\rangle=e^{2u(t)}\left(ds^2+dt^2\right)$, with $e^{2u(t)}
=\frac{1}{2a}\left(
\mathrm{t}_\delta^2\cosh^2t+\mathrm{s}_\delta^2 \sinh^2t\right)$.
Moreover we compute
$e^{i\beta_{\Phi}}=e^{-2u}\textstyle\det_\c\displaystyle(\Phi_s,\Phi_t)=
e^{i\,\mathrm{s}_\delta^2 \,s / \mathrm{c}_\delta}$. So we
conclude that $\Phi $ is HSL since $\beta_\Phi$ depends only on
$s$. Using (\ref{beta}) it is not hard to obtain that the mean
curvature vector of $\Phi$ satisfies that $
H_\Phi=\frac{\mathrm{s}_\delta^2
e^{-2u}}{2\mathrm{c}_\delta}J\Phi_s=a \Phi^\perp $.

To prove that $\Phi $ is an embedding, we start from
$\Phi(s_1,t_1)=\Phi(s_2,t_2)$ and then it is clear that when
$(s_1,t_1)\neq (s_2,t_2)$  necessarily $\mathrm{c}_\delta^2$ is a
rational number.

Finally, $\Phi$ is asymptotic to the given HSL cone taking into
account that if $t\rightarrow + \infty$ then  $\cosh t, \sinh t
\simeq e^t /2 $ and if $t\rightarrow - \infty$ then $\cosh t
\simeq  e^{-t} /2 $ and $\sinh t \simeq - e^{-t} /2 $.

The reasoning for $\Upsilon_\gamma$ is completely similar so that
we omit it.
\end{proof}

The next result describes examples with the topology of a
cylinder, a torus or a Klein bottle.

\begin{proposition}\label{ex2}
Given any $a < 0$, let define:
$$\Psi_\nu:\s^1 \times \r \rightarrow \c^2, \ \nu >0,$$
\begin{equation}\label{Phic}
\Psi_\nu(e^{i\,s},t)= \frac{1}{\sqrt{-2a}} \left( \mathrm{c}_\nu\,
\cos s\,e^{\frac{i\,t}{\mathrm{s}_\nu}},\mathrm{t}_\nu \, \sin
s\,e^{i\, \mathrm{s}_\nu \,t}\right),
\end{equation}
with $\mathrm{s}_\nu=\sinh \nu $, $\mathrm{c}_\nu=\cosh \nu$ and
$\mathrm{t}_\nu=\coth \nu$. Then $\Psi_\nu$ is a Hamiltonian
stationary Lagrangian conformal immersion and a self-similar
solution for mean curvature flow satisfying (\ref{ss}). If
$\sinh^2 \nu \notin \q$, $\Psi_\nu$ is -in addition- an embedded
self-shrinker cylinder.

If $\sinh^2 \nu = m/n \in \q$, $(m,n)=1$, then
$\Psi_\nu$ is given by 
$$\Psi_{m,n}:\s^1 \times \r \rightarrow \c^2, \ (m,n)=1, $$
\begin{equation}\label{Phimn}
\Psi_{m,n}(s,t)=\sqrt{\frac{m+n}{-2a}} \left(
\frac{1}{\sqrt{n}}\,\cos s\,e^{i \sqrt{\frac{n}{m}}t} ,
\frac{1}{\sqrt{m}}\,\sin s\,e^{i \sqrt{\frac{m}{n}}t} \right).
\end{equation}
Moreover, $\Psi\equiv\Psi_{m,n}$ satisfy the following properties:
\begin{enumerate}
\item $\Psi (s+2\pi, t)=\Psi (s,t) =\Psi (s,t+2\pi\sqrt{mn})
$, $\forall (s,t)\in\r^2$; if, in addition, $m$ and $n$ are odd
then $\Psi (s+\pi,t+\pi\sqrt{mn})=\Psi (s,t)$, $\forall
(s,t)\in\r^2$. Let $\Lambda_{m,n}$ the lattice spanned by
$(2\pi,0)$ and $(0,2\pi\sqrt{mn})$ (resp.\ $(2\pi,0)$ and
$(\pi,\pi\sqrt{mn})$) when $m$ or $n$ are even (resp.\ when $m$
and $n$ are odd) and $\mathcal{T}_{m,n}=\r^2 / \Lambda_{m,n}$ the
corresponding self-shrinker torus. Then:
$$ \mathrm{Area}(\mathcal{T}_{m,n})=\left\{ \begin{array}{ll}
\frac{(m+n)^2 \pi^2}{-a \sqrt{mn}}, & m \mathrm{\ or \ } n  \mathrm{\ even} \\ \\
\frac{(m+n)^2 \pi^2}{-2a \sqrt{mn}}, & m \mathrm{\ and \ } n
\mathrm{\ odd}
\end{array}\right.
$$
\item If $m$ is odd and $n$ is even, then $\Psi (2\pi -s,t+\pi\sqrt{mn})=\Psi (s,t)$,
$\forall (s,t)\in\r^2$.

If $m$ is even and $n$ is odd, then $\Psi (\pi
-s,t+\pi\sqrt{mn})=\Psi (s,t)$,
$\forall (s,t)\in\r^2$.

In both cases, $m+n$ is odd and $\mathcal{T}_{m,n}$ is the
covering of the corresponding self-shrinker Klein bottle
$\mathcal{K}_{m,n}$.
\item The Clifford torus $\mathcal{T}_{1,1} $ is the only one embedded in the above family.
\end{enumerate}
\end{proposition}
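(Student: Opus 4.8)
The plan is to follow verbatim the strategy used for $\Phi_\delta$ in Proposition \ref{ex1}, the only structural difference being that here the conformal factor will depend on $s$ rather than on $t$ (the roles of the two variables are interchanged). First I would compute $\Psi_s,\Psi_t$ and check, using $\mathrm{c}_\nu^2/\mathrm{s}_\nu^2=\mathrm{t}_\nu^2$ and $\mathrm{t}_\nu^2\mathrm{s}_\nu^2=\mathrm{c}_\nu^2$, that $|\Psi_s|^2=|\Psi_t|^2=\frac{1}{-2a}(\mathrm{c}_\nu^2\sin^2 s+\mathrm{t}_\nu^2\cos^2 s)=:e^{2u(s)}$ and $(\Psi_s,\Psi_t)=0$, so $\Psi_\nu$ is a conformal Lagrangian immersion. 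Then I would evaluate $e^{i\beta_\Psi}=e^{-2u}\det_\c(\Psi_s,\Psi_t)$ and find, after the same simplifications, $e^{i\beta_\Psi}=-i\,e^{i(\mathrm{s}_\nu+1/\mathrm{s}_\nu)t}$, i.e.\ $\beta_\Psi=(\mathrm{s}_\nu+1/\mathrm{s}_\nu)t-\pi/2$ depends only on $t$; being affine it is harmonic, so $\Psi_\nu$ is HSL. Finally, from (\ref{beta}) and $\mathrm{s}_\nu+1/\mathrm{s}_\nu=\mathrm{c}_\nu^2/\mathrm{s}_\nu$ I obtain $H_\Psi=\frac{\mathrm{c}_\nu^2 e^{-2u}}{2\mathrm{s}_\nu}J\Psi_t$, and a short computation of $\Psi^\perp$ (note $\langle\Psi,J\Psi_s\rangle=\mathrm{Im}(\Psi,\Psi_s)=0$ while $\langle\Psi,J\Psi_t\rangle=\mathrm{Im}(\Psi,\Psi_t)=\frac{\mathrm{c}_\nu^2}{2a\mathrm{s}_\nu}$) shows $H_\Psi=a\Psi^\perp$, establishing (\ref{ss}) with $a<0$, a self-shrinker.

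Next I would pass to the rational case. Setting $\sinh^2\nu=m/n$ gives $\mathrm{c}_\nu^2=(m+n)/n$, $\mathrm{t}_\nu^2=(m+n)/m$, $\mathrm{s}_\nu=\sqrt{m/n}$, and substituting these into (\ref{Phic}) reproduces (\ref{Phimn}) verbatim, with $t$-frequencies $\sqrt{n/m}$ and $\sqrt{m/n}$. The periods then come from elementary resonance: $\cos,\sin$ give the $(2\pi,0)$ period in $s$, while $t\mapsto t+2\pi\sqrt{mn}$ multiplies the two exponentials by $e^{2\pi i n}$ and $e^{2\pi i m}$, both $1$, and this is the smallest common $t$-period because $(m,n)=1$. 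When $m,n$ are both odd, the shift $(s,t)\mapsto(s+\pi,t+\pi\sqrt{mn})$ flips $\cos,\sin$ by a sign and multiplies the exponentials by $e^{i\pi n}=e^{i\pi m}=-1$, so the two signs cancel; this produces the finer lattice $\Lambda_{m,n}$ and the torus $\mathcal{T}_{m,n}=\r^2/\Lambda_{m,n}$.

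For the area I would integrate the area element $e^{2u(s)}\,ds\,dt=\frac{m+n}{-2a}\bigl(\tfrac{\sin^2 s}{n}+\tfrac{\cos^2 s}{m}\bigr)\,ds\,dt$ over a fundamental domain of $\Lambda_{m,n}$; using $\int_0^{2\pi}\sin^2=\int_0^{2\pi}\cos^2=\pi$ together with covolume $4\pi^2\sqrt{mn}$ (resp.\ half that, $2\pi^2\sqrt{mn}$, in the both-odd case, where the coarse lattice sits as an index-two sublattice) yields the two stated values. For the Klein bottle statements I would exhibit the glide reflections $(s,t)\mapsto(2\pi-s,t+\pi\sqrt{mn})$ when $m$ is odd and $n$ even, and $(s,t)\mapsto(\pi-s,t+\pi\sqrt{mn})$ when $m$ is even and $n$ odd; in each case the sign of exactly one exponential is flipped by $e^{i\pi m}$ or $e^{i\pi n}$ and compensates the reflection of $\cos$ or $\sin$, so $\Psi$ descends to a Klein bottle $\mathcal{K}_{m,n}$ orientation-double-covered by $\mathcal{T}_{m,n}$ (here $m+n$ is odd).

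The hard part will be the embeddedness statements, since the phase argument must be combined with a careful treatment of the circles where a coordinate vanishes. Starting from $\Psi(s_1,t_1)=\Psi(s_2,t_2)$, equality of moduli forces $\cos^2 s_1=\cos^2 s_2$ and $\sin^2 s_1=\sin^2 s_2$; off the loci $\cos s=0$, $\sin s=0$, matching the two phases yields $e^{i(t_1-t_2)/\mathrm{s}_\nu},\,e^{i\mathrm{s}_\nu(t_1-t_2)}\in\{\pm1\}$, whose compatibility for $t_1\neq t_2$ forces $\mathrm{s}_\nu^2=\sinh^2\nu\in\q$, giving the embedded cylinder when $\sinh^2\nu\notin\q$. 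In the rational case the obstruction is precisely these degenerate circles: on $\{\cos s=0\}$ the map $t\mapsto\Psi_{m,n}(\pi/2,t)$ closes up after $t$-increment $2\pi\sqrt{n/m}$, hence wraps $m$ times inside the $t$-period $2\pi\sqrt{mn}$, and likewise $\{\sin s=0\}$ wraps $n$ times; so $\mathcal{T}_{m,n}$ can be embedded only if $m=n=1$. That $\mathcal{T}_{1,1}$ is indeed embedded I would see by applying the unitary change $(z_1,z_2)\mapsto\frac{1}{\sqrt2}(z_1+iz_2,z_1-iz_2)$, which turns $\Psi_{1,1}(s,t)=\frac{1}{\sqrt{-a}}(\cos s,\sin s)\,e^{it}$ into $\frac{1}{\sqrt{-2a}}(e^{i(t+s)},e^{i(t-s)})$, the standard Clifford torus $T_{1/\sqrt{-2a}}$.
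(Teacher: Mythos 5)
Your verifications of the core claims are correct and follow exactly the paper's (much terser) route: the paper leaves as an ``exercise'' the facts that $\Psi_\nu$ is a conformal Lagrangian immersion with $e^{i\beta_\Psi}=-i\,e^{i\,\mathrm{c}_\nu^2\,t/\mathrm{s}_\nu}$ and $H_\Psi=\frac{\mathrm{c}_\nu^2e^{-2u}}{2\mathrm{s}_\nu}J\Psi_t=a\Psi^\perp$, and it deduces the properties of $\Psi_{m,n}$ from ``the above data'' and the remark preceding the proof; your identities $\mathrm{c}_\nu^2/\mathrm{s}_\nu^2=\mathrm{t}_\nu^2$, $\mathrm{t}_\nu^2\mathrm{s}_\nu^2=\mathrm{c}_\nu^2$, the periodicity and lattice analysis, the covolume computation giving the two area values, the glide reflections producing the Klein bottles, the wrap-count argument for part (3), and the unitary change identifying $\mathcal{T}_{1,1}$ with the Clifford torus $T_{1/\sqrt{-2a}}$ all fill in those details correctly.

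The genuine gap is the irrational-case embeddedness, and you located it yourself but did not close it: your phase argument is explicitly valid only ``off the loci $\cos s=0$, $\sin s=0$'', and no supplementary argument on those loci can rescue the conclusion, because injectivity actually fails there for every $\nu$. Indeed $\Psi_\nu(e^{i\pi/2},t)=\frac{1}{\sqrt{-2a}}\,(0,\mathrm{t}_\nu e^{i\mathrm{s}_\nu t})$ wraps the line $\{s=\pi/2\}$ infinitely often around a round circle, so for instance $\Psi_\nu(e^{i\pi/2},0)=\Psi_\nu(e^{i\pi/2},2\pi/\mathrm{s}_\nu)$ and also $\Psi_\nu(e^{i\pi/2},t)=\Psi_\nu(e^{-i\pi/2},t+\pi/\mathrm{s}_\nu)$, with distinct parameters in $\s^1\times\r$; moreover the sheets through such a point have first-coordinate phases $e^{2\pi ik/\mathrm{s}_\nu^2}$, $k\in\z$, which are dense when $\mathrm{s}_\nu^2\notin\q$, so the image is not an embedded surface near those circles under any reading. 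You should know that the paper's own proof has exactly the same defect: it asserts that $\Psi(s_1,t_1)=\Psi(s_2,t_2)$ with $(s_1,t_1)\neq(s_2,t_2)$ forces $\mathrm{s}_\nu^2\in\q$, which is false precisely on the degenerate circles. What irrationality really buys is injectivity away from the two circles where a coordinate vanishes (your argument proves this), and your own rational-case observation --- that $\{\cos s=0\}$ is covered $m$ times and $\{\sin s=0\}$ is covered $n$ times --- is, when pushed to the irrational case (infinite covering), exactly what contradicts the embeddedness claim as stated; so the gap reflects a flaw in the statement itself, not merely in your write-up.
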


\begin{remark}
{\rm The immersion $\Psi_{\arcsinh 1}\equiv\Psi_{1,1}$
corresponding to the Clifford torus $T_{1/\sqrt{-2a}}$, $a<0$, can
be checked to be the only Willmore surface in this family. Up to
isometries, it is enough to consider $\nu \in (0,\arcsinh 1] $
(and hence $m\leq n$) because $\Psi_{\hat \nu}$, with $ \hat{\nu}=
\log (\coth \nu/2)$, is congruent to $\Psi_\nu$.

By taking $x_1=\sqrt{\frac{m+n}{-2a}}\,\frac{\cos s}{\sqrt{n}}$
and $x_2=\sqrt{\frac{m+n}{-2a}}\,\frac{\sin s}{\sqrt{m}}$, we can
rewrite $$\Psi_{m,n}(s,\sqrt{mn}\,\theta)=( x_1 e^{in\theta},x_2
e^{i m\theta}), \ n x_1^2 + m x_2^2 = C=\frac{m+n}{-2a}>0.$$
Therefore we arrive again at other examples considered in
Proposition 2.1 of \cite{LW2} by Lee and Wang when $n=2$. We
remark that $\Psi_\nu$ generalize them although they can be
included in the observation made in Remark 2.1 of \cite{LW2}.

Using Proposition \ref{areaWillmore} we also get that the Willmore
functional $\mathcal{W}:=\int_M |H|^2 $ of the tori
$\mathcal{T}_{m,n}$ is given by
$$ \mathcal{W}(\mathcal{T}_{m,n})=\left\{ \begin{array}{ll}
\frac{(m+n)^2 \pi^2}{ \sqrt{mn}}, & m \mathrm{\ or \ } n  \mathrm{\ even} \\ \\
\frac{(m+n)^2 \pi^2}{2 \sqrt{mn}}, & m \mathrm{\ and \ } n
\mathrm{\ odd}
\end{array}\right.
$$}
\end{remark}

\begin{remark}
{\rm Following the spirit of \cite{CCh}, the Lagrangians $\Psi_\nu
$ are constructed with the constant curvature Legendrian curves
$t\rightarrow \frac{1}{\sqrt{-2a}}( \mathrm{c}_\nu\, e^{i\,t /
\mathrm{s}_\nu},\mathrm{t}_\nu \,e^{i\, \mathrm{s}_\nu \,t})$ in
anti De Sitter spaces and the Legendrian geodesic $s \rightarrow
(\cos s, \sin s)$ in the unit 3-sphere.

On the other hand, it is clear that the HSL tori
$\mathcal{T}_{m,n}$ admit a one-parameter group of isometries.
Using the notation of \cite{CU2}, it is not complicated to check
that their universal coverings would correspond to the immersions
$\mathcal{F}_{0,\beta}^{\pi/2}$, $\sin \beta \in \q$.}
\end{remark}

\begin{proof}
Given any $a<0$, let $\Psi=\Psi_\nu$. It is an exercise to check
that
$$
|\Psi_s|^2=\frac{1}{-2a}\left( \mathrm{t}_\nu^2\cos^2 s
+\mathrm{c}_\nu^2 \sin^2 s\right)=|\Psi_t|^2, \ (\Psi_s,\Psi_t)=0.
$$
This shows that $\Psi$ is a Lagrangian conformal immersion whose
induced metric is given by $\langle ,
\rangle=e^{2v(s)}\left(ds^2+dt^2\right)$, with $e^{2v(s)}
=\frac{1}{-2a}\left( \mathrm{t}_\nu^2\cos^2 s +\mathrm{c}_\nu^2
\sin^2 s\right)$. Moreover we get that $e^{i\beta_{\Psi}(t)}=
-i\,e^{i\,\mathrm{c}_\nu^2 \,t / \mathrm{s}_\nu}$. Thus we
conclude that $\Psi $ is HSL. Using (\ref{beta}) it is easy to
obtain that the mean curvature vector of $\Psi$ satisfies that $
H_\Psi=\frac{\mathrm{c}_\nu^2 e^{-2v}}{2\mathrm{s}_\nu}J\Psi_t=a
\Psi^\perp $.

It is straightforward to get that $\Psi $ is an embedding since
$\Psi(s_1,t_1)=\Psi(s_2,t_2)$ and $(s_1,t_1)\neq (s_2,t_2)$
implies that $\mathrm{s}_\nu^2$ is a rational number.

The geometric properties of $\Psi_{m,n}$ are deduced making use of
the above data and Remark 4.
\end{proof}


\section{Classification}

\begin{theorem}\label{main}
Let $\phi: M^2 \rightarrow \c^2 $ be a Hamiltonian stationary Lagrangian self-similar solution for
mean curvature flow.
\begin{itemize}
\item[(a)] If $\phi$ is a self-expander, i.e.\ $H=a\phi^\perp$, $a>0$, then $\phi $ is locally congruent
to some $\Phi_\delta:\r^2 \rightarrow \c^2$, $\delta>0$, described
in Proposition \ref{ex1}.
\item[(b)] If $\phi$ is a self-shrinker, i.e.\ $H=a\phi^\perp$, $a<0$, then $\phi $ is locally congruent
to some of the following:
\begin{itemize}
\item[(i)] the right circular cylinder $\s^1({1 \over \sqrt{-2a}})\times \r$;
\item[(ii)] the Clifford torus $\s^1({1\over \sqrt{-2a}})\times \s^1({1\over\sqrt{-2a}})$;
\item[(iii)] some $\Upsilon_\gamma:\r^2 \rightarrow \c^2$, $0<\gamma<\pi/2 $, described in Proposition \ref{ex1};
\item[(iv)] some $\Psi_\nu:\s^1 \times \r \rightarrow \c^2$, $\nu >0 $, described in Proposition \ref{ex2}.
\end{itemize}
\end{itemize}
\end{theorem}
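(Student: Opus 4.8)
The plan is to reduce the two defining conditions---Hamiltonian stationary, $\Delta\beta=0$, and self-similar, $H=a\phi^\perp$---to an explicitly integrable system and then to match the normal forms of Propositions \ref{ex1} and \ref{ex2}. Everything is local, and I may assume $\phi$ is real-analytic: writing (\ref{ss}) as $\Delta\phi=2a\,\phi^\perp=2a(\phi-\phi^\top)$ via (\ref{Hdelta}) exhibits a semilinear elliptic system, whose solutions are analytic. If $H\equiv0$ on an open set then $\phi^\perp\equiv0$ because $a\neq0$, so the position vector is everywhere tangent and $\phi$ parametrizes a minimal Lagrangian cone, necessarily a Lagrangian plane through the origin; setting this degenerate member aside I assume $\nabla\beta\neq0$, which by analyticity holds off a discrete set.

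First I choose adapted coordinates. Take a local conformal parameter $z=s+it$, so the induced metric is $e^{2u}(ds^2+dt^2)$ and $\Delta\beta=0$ becomes $\beta_{ss}+\beta_{tt}=0$. Writing the harmonic function $\beta$ as the real part of a holomorphic $f$ and passing to the conformal parameter $w=f(z)$---the one genuinely complex-analytic step---I may assume $\beta=\mu\,s$ with $\mu\neq0$ constant. Combining $2H=J\nabla\beta$ from (\ref{beta}) with $2H=\Delta\phi$ from (\ref{Hdelta}) then yields the linear, constant-coefficient equation
\begin{equation}\label{plan-lin}
\phi_{ss}+\phi_{tt}=\mu\,J\phi_s ,
\end{equation}
which, since $J$ is multiplication by $i$, reads $\phi_{ss}+\phi_{tt}=i\mu\,\phi_s$ componentwise. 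The self-similar condition $H=a\phi^\perp$---equivalently the reformulation of Remark 1---supplies the first integrals $\langle J\phi,\phi_t\rangle=0$ and $\langle J\phi,\phi_s\rangle=$ a nonzero constant, to be used together with conformality, $(\phi_s,\phi_s)=(\phi_t,\phi_t)$ and $(\phi_s,\phi_t)=0$.

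Next I integrate (\ref{plan-lin}). A separated mode $e^{i\xi s}g(t)$ solves it iff $g''=\xi(\xi-\mu)g$, and the decisive structural point is that each component $\phi_1,\phi_2$ must be a \emph{single} such mode, $\phi_j(s,t)=e^{i\xi_j s}g_j(t)$ with $\xi_1+\xi_2=\mu$. This is where harmonicity and self-similarity are played against conformality: the Lagrangian-angle identity $\det_{\c}(\phi_s,\phi_t)=e^{2u}e^{i\mu s}$ forces $e^{-i\mu s}\det_{\c}(\phi_s,\phi_t)$ to be real and positive, and imposing the first integrals together with $(\phi_s,\phi_t)=0$ eliminates every frequency but one in each slot. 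Since then $\xi_1-\mu=-\xi_2$ and $\xi_2-\mu=-\xi_1$, both profiles satisfy the \emph{same} equation $g_j''=-\xi_1\xi_2\,g_j$, whose solutions are hyperbolic, trigonometric, or affine according to the sign of $\xi_1\xi_2$. After fixing the remaining constants from conformality and the first integrals and applying a congruence in $U(2)$ to diagonalize the orbit, $\phi$ takes the equivariant form $\phi(s,t)=(e^{i\xi_1 s}g_1(t),e^{i\xi_2 s}g_2(t))$---exactly the Castro--Chen construction of Remarks 4 and 6, with $g_1,g_2$ the coordinates of a geodesic and the $e^{i\xi_j s}$ a constant-curvature Legendrian factor.

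Finally I organize by the sign of $a$, which enters through the first integral $\langle J\phi,\phi_s\rangle$ and the requirement $e^{2u}>0$; positivity of the induced metric and the sign of that constant select the admissible profiles. For $a>0$ only the hyperbolic profile survives, giving $\Phi_\delta$ and hence part (a). For $a<0$ the hyperbolic profile gives $\Upsilon_\gamma$, the genuine trigonometric profile gives $\Psi_\nu$, and the degenerate flat case $u\equiv\mathrm{const}$ (pure-phase or affine profiles) gives the two product self-shrinkers, the right circular cylinder and the Clifford torus, covering part (b), (i)--(iv). For the compact or periodic members the closing conditions force the relevant frequency ratio to be rational, reproducing the parametrizations $\Phi_{p,q}$ in (\ref{Phipq}) and $\Psi_{m,n}$ in (\ref{Phimn}). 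I expect the main obstacle to be the structural step of the previous paragraph---upgrading a solution of the linear equation (\ref{plan-lin}) to a single separated mode in each component, and thus to the equivariant normal form; once this is secured, the integration of $g_j''=-\xi_1\xi_2 g_j$ and the case analysis are routine.
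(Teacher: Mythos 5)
Your reduction to the linear equation $\phi_{ss}+\phi_{tt}=\mu J\phi_s$ is correct and is in fact the same normalization the paper makes: harmonicity of $\beta$ makes $h=\beta_{\bar z}$ satisfy $h_z=0$, and the paper's normalization $h\equiv\mu$ is exactly your conformal reparametrization making $\beta$ linear in one coordinate; likewise your first integrals $\langle J\phi,\phi_t\rangle=0$ and $\langle J\phi,\phi_s\rangle=\mathrm{const}$ are precisely the content of Remark 1. The problem is the step you yourself flag as the main obstacle: the claim that conformality plus these first integrals force each component $\phi_j$ to be a \emph{single} separated mode $e^{i\xi_j s}g_j(t)$. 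That claim is the entire content of the classification, and your proposal offers only a heuristic for it. As stated it does not follow: on a local patch, solutions of the elliptic equation $\phi_{ss}+\phi_{tt}=i\mu\,\phi_s$ are not discrete superpositions of separated modes (there is no periodicity in $s$ and no decay, hence no Fourier decomposition to invoke), and even granting some mode decomposition, the constraints you invoke --- $(\phi_s,\phi_t)=0$, $|\phi_s|^2=|\phi_t|^2$, the determinant identity for $e^{i\beta}$ --- are quadratic in $\phi$, so they impose convolution-type relations among modes; ruling out multi-mode solutions from such relations is a genuine argument, and nothing in the proposal performs it.

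The paper closes exactly this gap by a different mechanism, and the contrast is instructive. Instead of analyzing the linear PDE directly, it derives scalar ODEs: with $h\equiv\mu$, self-similarity forces $g=|\phi|^2$ to depend on one variable only and to satisfy $a^2(gg''-g'^2)=4\mu^2(1+ag)$ (equations (\ref{mod})--(\ref{odeg})), which admits the first integral (\ref{energy}) with an energy constant $E$. A computation with the Frenet equations (\ref{Frenetreal}) then yields $\phi_{xyy}=E\phi_x$ and $\phi_{yyy}=E\phi_y$, hence after a translation the \emph{vector} ODEs (\ref{Frenet1}) and (\ref{Frenet2}), namely $\phi_{yy}=E\phi$ and $\phi_{xx}=-E\phi+2\mu J\phi_x$. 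Integrating these, the coefficient curves satisfy $C_1'=-\frac{ia\alpha}{\mu}C_1$ and $C_2'=\frac{i\mu E}{a\alpha}C_2$, so the single-mode equivariant form is the \emph{output} of the integration, not an input. The trichotomy you organize by ``profile type'' appears there as the sign of $E$: constant $g$ gives the Clifford torus, $E=0$ the right circular cylinder (both forcing $a<0$), $E>0$ the hyperbolic solutions $\Phi_\delta$ ($a>0$) and $\Upsilon_\gamma$ ($a<0$), and $-\mu^2\le E<0$ the trigonometric solutions $\Psi_\nu$ ($a<0$). If you want to salvage your route, the missing lemma is precisely that $\phi_{yy}=E\phi$ for some constant $E$ --- equivalently, that all $t$-profiles obey one and the same second-order equation --- and the paper's ODE for $|\phi|^2$ is the mechanism that produces that constant.
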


\begin{proof}
First, following \cite{CU1} we can associate to any Lagrangian
immersion $\phi :M \longrightarrow \c ^2$ a cubic differential
form $\Theta $ on $M$ defined by
\[ \Theta (z)=f(z)(dz)^3, {\rm \ with \ } f(z)=4C(\partial_{z},\partial_{z},
  \partial_{z}) \]
and a differential form $\Lambda $ on $M$ defined by
\[ \Lambda (z)=\bar{h}(z)dz, {\rm \ with \ } h(z)=2 \omega (
\partial_{\bar z}, H ), \]
where $ z=x+iy$ is a local isothermal coordinate on $M$ such that
the induced metric, also denoted by $\langle \,\, , \, \rangle $,
is written as $\langle ,\rangle = e^{2u}|dz|^2$ with $|dz|^2$ the
Euclidean metric, and $C$ and $\omega $ are extended $\c$-linearly
to the complexified tangent bundles. We remark that our $h$ here
is $\bar h$ in \cite{CU1}.

It is straightforward to check that the Frenet equations of $\phi $  are given by
\begin{eqnarray}\label{Frenet}
 \phi_{zz} = 2u_{z}\phi_{z} + \frac{\bar h}{2} J\phi_{z}+
 \frac{e^{-2u} f}{2} J\phi_{\overline{z}},  \\
  \phi_{z \overline{z}} = \frac{h}{2}
              J\phi_{z}+\frac{\overline h}{2}J\phi_{\overline{z}}, \nonumber
\end{eqnarray}
and it is not difficult (see equation (3.3) in \cite{CU1}) to get
the compatibility equations for (\ref{Frenet}), obtaining
\begin{eqnarray}\label{compa}
4u_{z  \overline{z}}+\frac{|h|^2-e^{-4u}|f|^2}{2}  =  0 \nonumber \\
{\rm Im}(h_z)  =  0 \\
\overline{f_{\overline{z}}}=e^{2u}(h_{\bar z}-2u_{\bar z}h) \nonumber
\end{eqnarray}
Now (\ref{beta}) translates into $h=\beta_{\bar z}$.

Since $\phi $ is a self-similar solution for mean curvature flow,
using (\ref{ss}) we have that $h=-2a\langle \phi_{\overline z},
J\phi \rangle$, $a\neq 0$, and from (\ref{Frenet}) we deduce that
\begin{equation}\label{hz}
h_z=-a {\rm Re}(h|\phi|^2_z),
\end{equation}
and, taking into account that $\langle \phi_z , \phi_{\bar z}
\rangle=e^{2u}/2$ and $\langle \phi_z, \phi_z \rangle =0$, from
(\ref{Frenet}) we also have that
\begin{equation}\label{mod}
|\phi|^2_{z \overline{z}}=\frac{|h|^2}{a}+e^{2u}
\end{equation}
and
\begin{equation}\label{f}
|\phi|^2_{zz}= 2u_z |\phi|^2_z + \frac{1}{2a} \left( \bar{h}^2 +
e^{-2u} f h \right).
\end{equation}

 As $\phi $ is also a Hamiltonian stationary Lagrangian
immersion, the second equation of (\ref{compa}) implies that
$\bar{h}_{\bar{z}}=h_z=\beta_{z \overline{z}}=0$. Hence $\Lambda$
is a holomorphic differential and we can normalize $h\equiv \mu$,
$\mu >0$, since the zeroes of $h$ and $H$ are the same and $a\neq
0$. Thus (\ref{hz}) says that $g:=|\phi|^2$ satisfies $g_x=0$,
that is $g=g(y)$. In addition, from (\ref{mod}) $g$ satisfies
\begin{equation}\label{g´´}
g''=4\left(\frac{\mu^2}{a}+e^{2u} \right).
\end{equation}
In particular, we get that $u=u(y)$ too. We can now express $f$ in
terms of $g$ and $u$ from (\ref{f}) by
\begin{equation}\label{fexp}
f=\frac{e^{2u}}{\mu}\left(a u'g'-\frac{a}{2}g''-\mu^2 \right).
\end{equation}

If $\phi^\top$ denotes the tangent part of $\phi$, using
$|h|^2=e^{2u}|H|^2$ and taking modules in the equality
$\phi=\phi^\top+H/a$ yields to
\begin{equation}\label{g}
g=e^{-2u}\left( \frac{g'^2}{4}+\frac{\mu^2}{a^2} \right)
\end{equation}
This implies that $g>0$.

From (\ref{g´´}) and (\ref{g}) we arrive at the following o.d.e.\ for $g$:
\begin{equation}\label{odeg}
a^2(g g'' - g'^2)=4\mu^2(1+a g).
\end{equation}

Only when $a<0$ the equation (\ref{odeg}) has a constant solution
$g\equiv -1/a$. In this case (\ref{g´´}) or (\ref{g}) gives
$e^{2u}\equiv -\mu^2/a$ and (\ref{fexp}) gives $f\equiv \mu^3/a$.
The integration of the corresponding Frenet equations
(\ref{Frenet}), now simply written as $\phi_{xx}=\phi_{yy}=\mu
J\phi_x, \ \phi_{xy}=\mu J\phi_y$, leads to the Clifford torus
$\s^1({1\over \sqrt{-2a}})\times \s^1({1\over\sqrt{-2a}})$. This
proves part (b)-(ii).

In the general case, we obtain a first integral for (\ref{odeg}) given by
\begin{equation}\label{energy}
g'^2=P(g):=4Eg^2-\frac{8\mu^2}{a}g-\frac{4\mu^2}{a^2}, \ \  E\in
\r.
\end{equation}
We now look for the o.d.e.\ for the conformal factor of the
induced metric. Using (\ref{g}) and (\ref{energy}) we have that
\begin{equation}\label{uyg}
e^{2u}=E g -2\mu^2/a .
\end{equation}
Then (\ref{energy}) translates into
\begin{equation}\label{energy2}
u'^2-{2\mu^2 E \over a}e^{-2u} + {\mu^2E^2 \over a^2}e^{-4u}=E
\end{equation}
and so
\begin{equation}\label{u´´}
u''+{2\mu^2 E \over a}e^{-2u}- {2\mu^2 E^2\over a^2} e^{-4u} =0.
\end{equation}
Using (\ref{uyg}) and (\ref{energy2}), (\ref{fexp}) implies that
\begin{equation}\label{fend}
f=\mu (e^{2u}-2E/a),
\end{equation}
that is compatible with (\ref{compa}) and (\ref{u´´}).

Then we can rewrite the Frenet equations (\ref{Frenet}) in the
following way:
\begin{eqnarray}\label{Frenetreal}
&& \phi_{xx}=-u' \phi_y +\left( 2\mu-\frac{\mu E}{a} e^{-2u} \right)J\phi_x\nonumber\\
&& \phi_{xy}=u' \phi_x + \frac{\mu E}{a} e^{-2u} J\phi_y\\
&& \phi_{yy}=u' \phi_y + \frac{\mu E}{a} e^{-2u} J\phi_x \nonumber
\end{eqnarray}
 After a long computation, using (\ref{Frenetreal}), (\ref{energy2}) and (\ref{u´´}),
we get that $ \phi_{xyy}=E \phi_x $ and $ \phi_{yyy}=E \phi_y $.
Up to translations, we can consider
\begin{equation}\label{Frenet1}
\phi_{yy}=E \phi
\end{equation}
and (\ref{Frenetreal}) gives
\begin{equation}\label{Frenet2}
\phi_{xx} = -E \phi + 2 \mu J\phi_x .
\end{equation}
In particular, $H=\mu e^{-2u}J\phi_x$.

On the one hand, when $E=0$ it is necessarily $a<0$ from
(\ref{energy}). Using (\ref{uyg}), we obtain that $e^{2u}\equiv
-2\mu^2/a$ and (\ref{fend}) gives $f\equiv -2\mu^3/a$. The
integration of the corresponding Frenet equations
(\ref{Frenetreal}), now simply written as $\phi_{xx}=2\mu J\phi_x,
\ \phi_{xy}=\phi_{yy}=0$ leads to the right circular cylinder
$\s^1({1\over\sqrt{-2a}})\times \r$. This proves part (b)-(i).

On the other hand, when $E\neq 0$, the discriminant of the second
order polynomial $P(g)$ in (\ref{energy}) is
$64\mu^2(\mu^2+E)/a^2$. As $P(g)=g'^2$ must be non negative for
$g>0$ (and observe that $P(0)<0$) we distinguish two cases
according to the sign of the energy $E$ to reach the following
conclusions:
\begin{itemize}
\item
{\em Case (a):} If $E>0$ then $g$ is bounded from below.
\item
{\em Case (b):} If $E<0$, it is necessarily $E\geq -\mu^2$, and
$g$ is bounded from below and from above. We remark that
(\ref{uyg}) shows that if $E<0$ then it is necessarily $a<0$ and
we also point out that the case $E=-\mu^2$ corresponds precisely
with the constant case $g\equiv1/-a$.
\end{itemize}

We now proceed to integrate explicitly (\ref{Frenetreal}) through
(\ref{Frenet1}) and (\ref{Frenet2}) . From (\ref{uyg}) there is no
restriction supposing $u'(0)=0$. Let $\alpha:=e^{2u(0)}>0$. So
(\ref{energy2}) says that
\begin{equation}\label{ctes}
E=a \alpha \left(2 +\frac{a\alpha}{\mu^2} \right).
\end{equation}

{\em Case (a): $E>0$.} Using (\ref{Frenetreal}), (\ref{Frenet1})
and (\ref{Frenet2}), we get
$$ \phi(x,y)= \cosh (\sqrt{E}y)C_1(x) + \sinh (\sqrt{E}y)C_2(x),$$
where  $C_1(x)=\frac{\mu}{a \alpha}J\phi_x(x,0)$ and
$C_2(x)=\frac{1}{\sqrt E}\phi_y(x,0)$. It is clear that
$(C_1(x),C_2(x))=0$. In addition, (\ref{Frenetreal}) and
(\ref{ctes}) imply that $C_1'(x)=\frac{-ia\alpha}{\mu}C_1(x)$ and
$C_2'(x)=\frac{i \mu E}{a \alpha}C_2(x)$.

Choosing in $\c^2$ the unitary reference $\varepsilon_1 =
\phi_x(0,0)/\sqrt \alpha$, $\varepsilon_2 = \phi_y(0,0)/\sqrt
\alpha$ we arrive at
$$ \phi (x,y)=\left(  \frac{i\mu}{a \sqrt{\alpha}} \cosh (\sqrt{E}y) \exp \left(-\frac{i a \alpha}{\mu}x\right),
\frac{\sqrt{\alpha}}{\sqrt E} \sinh (\sqrt{E}y) \exp \left(\frac{i
\mu E}{a \alpha }x\right)\right). $$ Introducing the new parameter
$b:=g(0)>0$, using (\ref{g}) we have $b=\frac{\mu^2}{\alpha a^2}$
and (\ref{ctes}) gives that $1+2ab = \frac{\mu^2 E}{\alpha^2
a^2}$. We observe that if $a<0$ then $0<b<-1/2a$ in order to get
$E>0$. Changing coordinates with $x+iy=\frac{1}{\sqrt E}(s+it)$ we
finally get
$$ \phi(s,t)=\left( \pm i\sqrt{b}\cosh t\,\exp\left(\frac{\mp i\,s}{\sqrt{1+2ab}}\right),
\frac{\sqrt{b}}{\sqrt{1+2ab}}\sinh t\,\exp\left(\pm
i\sqrt{1+2ab}\,s\right)\right),
$$
using the sign $\pm $ according to $a\gtrless 0$. If $a>0$ we put
$b=\frac{\sinh^2 \delta}{2a}$, $\delta
>0$, and this proves part (a); and if $a<0$ we put $b=\frac{\sin^2
\gamma}{-2a}$, $0<\gamma <\pi/2$, and this proves part (b)-(iii).

{\em Case (b): $-\mu^2 \leq E <0$.} In this case remember that
$a<0$ and the reasoning is similar. Using (\ref{Frenetreal}),
(\ref{Frenet1}) and (\ref{Frenet2}), we get
$$ \phi(x,y)= \cos (\sqrt{-E}y)C_1(x)+\sin (\sqrt{-E}y)C_2(x),$$
where $C_1(x)=\frac{\mu}{a \alpha}J\phi_x(x,0)$ and
$C_2(x)=\frac{1}{\sqrt{-E}}\phi_y(x,0)$. 
Again (\ref{Frenetreal}) and (\ref{ctes}) imply that
$C_1'(x)=\frac{-ia\alpha}{\mu}C_1(x)$, $C_2'(x)=\frac{i \mu E}{a
\alpha}C_2(x)$. Hence:
$$ \phi (x,y)=\left(  \frac{i\mu}{a \sqrt{\alpha}} \cos (\sqrt{-E}y) \exp \left(\frac{-i a \alpha}{\mu}x \right),
\frac{\sqrt{\alpha}}{\sqrt{-E}} \sin (\sqrt{-E}y) \exp
\left(\frac{i \mu E}{a \alpha }x\right)\right). $$

Introducing the new parameter $c:=g(0)>0$, using (\ref{g}) we also
have $c=\frac{\mu^2}{\alpha a^2}$ and now (\ref{ctes}) gives that
$-1-2ac = \frac{-\mu^2 E}{\alpha^2 a^2}$ and  observe that $-\mu^2
\leq E <0$  only implies that $c>-1/2a$. Using the coordinates
$x+iy=\frac{1}{\sqrt{-E}}(t+is)$, we finally get
$$ \phi(t,s)=\left(-i\sqrt{c}\cos
s\exp\left(\frac{i\,t}{\sqrt{-\!1-\!2ac}}\right),\frac{\sqrt{c}}{\sqrt{-\!1-\!2ac}}\sin
s\exp\left(i\sqrt{-\!1-\!2ac}\,t\right)\right), $$ where
$\frac{-1}{2a}<c$. Finally we put $c=\frac{\cosh^2 \nu}{-2a}$,
$\nu >0$, and this proves part (b)-(iv).
\end{proof}

\begin{corollary}
The tori $\mathcal{T}_{m,n}$ (described in Proposition~\ref{ex2})
are the only compact orientable Hamiltonian stationary
self-similar solutions for mean curvature flow in complex
Euclidean plane.
\end{corollary}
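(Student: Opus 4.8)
The plan is to derive the corollary directly from the classification in Theorem~\ref{main}, after using the compactness and orientability hypotheses to eliminate every model except the tori. I would start with Proposition~\ref{areaWillmore}: since $M$ is compact, it forces $a<0$, so $\phi$ cannot be a self-expander. This at once removes the family $\Phi_\delta$ of part~(a), and leaves only the four self-shrinker possibilities (b)(i)--(iv) to examine.

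The next step is to discard the non-compact models among these. The efficient invariant is the squared distance $g=|\phi|^2$, which on a compact immersed surface is necessarily bounded. In the proof of Theorem~\ref{main} this function obeys the first integral (\ref{energy}), namely $g'^2=4Eg^2-\frac{8\mu^2}{a}g-\frac{4\mu^2}{a^2}$, and the models arising from $E\ge 0$ all have $g\to\infty$: the circular cylinder of (b)(i) contains a straight line in the $y$-direction, and the surfaces $\Upsilon_\gamma$ of (b)(iii) involve $\cosh t,\sinh t$. Hence these are non-compact and can be excluded, reducing the analysis to the case $E<0$, that is, to the family $\Psi_\nu$ of part~(iv), of which the Clifford torus (b)(ii)$=\mathcal{T}_{1,1}$ is a distinguished member. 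Equivalently, one may simply quote Propositions~\ref{ex1} and~\ref{ex2}, where the circular cylinder, the planes and the cylinders/Moebius strips $\Upsilon_\gamma$ are already shown to be non-compact.

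Finally, within the family $\Psi_\nu$ I would invoke Proposition~\ref{ex2} to finish the selection. When $\sinh^2\nu\notin\q$ the surface is a non-compact embedded cylinder, so compactness forces $\sinh^2\nu=m/n\in\q$ and hence $\phi$ to be a quotient of $\Psi_{m,n}$. By the same proposition the resulting compact surface is a torus $\mathcal{T}_{m,n}$ when $m$ and $n$ are both odd, and otherwise (when $m+n$ is odd) a Klein bottle $\mathcal{K}_{m,n}$, with $\mathcal{T}_{m,n}$ its orientable double cover. Imposing orientability therefore discards the Klein bottles and leaves precisely the tori $\mathcal{T}_{m,n}$, which is the assertion.

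The step that I expect to require the most care is the passage from the \emph{local} congruence provided by Theorem~\ref{main} to a \emph{global} statement about the compact surface $M$, since a priori a compact manifold could be locally congruent to a complete non-compact model. The boundedness of $g=|\phi|^2$ is exactly what makes this passage safe: it rules out every model with $E\ge 0$ before any gluing is attempted, and the remaining models are then pinned down globally by the periodicity and orientability bookkeeping already carried out in Proposition~\ref{ex2}.
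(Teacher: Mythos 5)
Your proposal is correct and follows essentially the route the paper intends: the corollary is stated there without proof, as an immediate consequence of Theorem~\ref{main} combined with Propositions~\ref{areaWillmore}, \ref{ex1} and \ref{ex2}, which is exactly the combination you use (Proposition~\ref{areaWillmore} to force $a<0$ and discard expanders, non-compactness to discard the planes, cylinders and Moebius strips, and the periodicity/orientability bookkeeping of Proposition~\ref{ex2} to single out the tori $\mathcal{T}_{m,n}$ rather than the Klein bottles $\mathcal{K}_{m,n}$). Your extra attention to the local-to-global passage --- ruling out the models with $E\ge 0$ through the boundedness of $g=|\phi|^2$ and the first integral (\ref{energy}) on a compact surface --- is a refinement of a point the paper leaves implicit.
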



\end{document}